\def\iddots{\mathinner{\mkern1mu\raise\p@
\vbox{\kern7\p@\hbox{.}}\mkern2mu
\raise4\p@\hbox{.}\mkern2mu\raise7\p@\hbox{.}\mkern1mu}}
\newcommand{\RR}{\mathbb{R}}
\DeclareMathOperator{\rank}{rank}
\DeclareMathOperator{\ranksoc}{\rank_{soc}}
\DeclareMathOperator{\ext}{ext} 
\DeclareMathOperator{\covsoc}{cov_{soc}}
\newcommand{\eqsupp}{\overset{supp}{=}} 
\newcommand{\SOC}{\mathcal Q}
\newcommand{\SOCn}{soc}
\newcommand{\E}{E} 
\renewcommand{\S}{\mathbf{S}}
\theoremstyle{plain}
\newtheorem{lemma}{Lemma}
\newtheorem{theorem}{Theorem}
\newtheorem{fact}{Fact}
\theoremstyle{definition}
\newtheorem{definition}{Definition}
\theoremstyle{remark}
\newtheorem{remark}{Remark}
\title{On representing the positive semidefinite cone using the second-order cone}
\author{Hamza Fawzi\thanks{Department of Applied Mathematics and Theoretical Physics, University of Cambridge. Email: \texttt{h.fawzi@damtp.cam.ac.uk}.}}
\date{\today}
\begin{document}

\maketitle

\begin{abstract}
The second-order cone plays an important role in convex optimization and has strong expressive abilities despite its apparent simplicity. Second-order cone formulations can also be solved more efficiently than semidefinite programming in general. We consider the following question, posed by Lewis and Glineur, Parrilo, Saunderson: is it possible to express the general positive semidefinite cone using second-order cones? We provide a negative answer to this question and show that the $3\times 3$ positive semidefinite cone does not admit any second-order cone representation. Our proof relies on exhibiting a sequence of submatrices of the slack matrix of the $3\times 3$ positive semidefinite cone whose ``\emph{second-order cone rank}'' grows to infinity.
We also discuss the possibility of representing certain slices of the $3\times 3$ positive semidefinite cone using the second-order cone.
\end{abstract}

\section{Introduction}

Let $\SOC \subset \RR^3$ denote the three-dimensional \emph{second-order} cone (also known as the ``ice-cream'' cone or the Lorentz cone):
\[
\SOC = \{ (x,t) \in \RR^2 \times \RR : \|x\| \leq t \}.
\]
It is known that $\SOC$ is linearly isomorphic to the cone of $2\times 2$ real symmetric positive semidefinite matrices. Indeed we have:
\begin{equation}
\label{eq:SOC2x2PSD}
(x_1,x_2,t) \in \SOC \quad \Longleftrightarrow \quad \begin{bmatrix} t-x_1 & x_2\\ x_2 & t+x_1\end{bmatrix} \succeq 0.
\end{equation}
Despite its apparent simplicity the second-order cone $\SOC$ has strong expressive abilities and allows us to represent various convex constraints that go beyond ``simple quadratic constraints''. For example it can be used to express geometric means ($x\mapsto \prod_{i=1}^n x_i^{p_i}$ where $p_i \geq 0$ and $\sum_{i=1}^n p_i = 1$), $\ell_p$-norm constraints, multifocal ellipses, robust counterparts of linear programming, etc. We refer the reader to \cite{ben2001lectures} for more details.

Given this strong expressive ability one may wonder whether the general positive semidefinite cone can be represented using $\SOC$. This question was posed in particular by Adrian Lewis (personal communication) and Glineur, Parrilo and Saunderson  \cite{glineurSOC_ICCOPT2013}. In this paper we show that this is not possible, even for the $3\times 3$ positive semidefinite cone. To make things precise we use the language of lifts (or extended formulations), see \cite{gouveia2011lifts}.
We say that a convex cone $K \subset \RR^m$ has a \emph{second-order cone lift of size $k$} (or simply $\SOC^k$-lift) if it can be written as the projection of an affine slice of the Cartesian product of $k$ copies of $\SOC$, i.e.:
\begin{equation}
\label{eq:KSOClift}
K = \pi\left(\SOC^k \cap L\right)
\end{equation}
where $\pi:\RR^{3k} \rightarrow \RR^m$ is a linear map and $L$ is a linear subspace of $\RR^{3k}$, and $\SOC^k$ is the Cartesian product of $k$ copies of $\SOC$:
\[
\SOC^k = \SOC \times \dots \times \SOC \quad \text{($k$ copies)}.
\]
In this paper we prove:
\begin{theorem}
\label{thm:main}
The cone $\S^3_+$ does not admit any $\SOC^k$-lift for any finite $k$.
\end{theorem}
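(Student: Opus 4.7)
The strategy I would pursue follows the framework advertised in the abstract. First, I would invoke a Yannakakis-type cone factorization theorem (in the spirit of Gouveia--Parrilo--Thomas) to reduce the question to a factorization statement for the slack operator of $\S^3_+$. Since $\S^3_+$ is self-dual with extreme rays given by rank-one matrices $uu^{\tr}$, this slack operator is essentially $S(u,v) = (u^{\tr}v)^2$ for $u,v \in \RR^3$. Define the \emph{second-order cone rank} $\ranksoc(M)$ of a nonnegative matrix $M$ to be the smallest $k$ such that $M_{ij} = \sum_{\ell=1}^k \langle a_i^{(\ell)},b_j^{(\ell)}\rangle$ with $a_i^{(\ell)},b_j^{(\ell)} \in \SOC \subset \RR^3$. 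The factorization theorem says that $\S^3_+$ has a $\SOC^k$-lift if and only if its slack matrix has $\ranksoc \le k$. Since $\ranksoc$ is monotone under passing to submatrices, it suffices to produce a sequence of finite submatrices $M^{(n)}$ of the slack matrix of $\S^3_+$ with $\ranksoc(M^{(n)}) \to \infty$.

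Next, I would construct the submatrices. A submatrix is specified by two finite families of unit vectors $\{u_i\}_{i=1}^n, \{v_j\}_{j=1}^n$ in $\RR^3$, yielding $M^{(n)}_{ij} = (u_i^{\tr}v_j)^2$. The entries vanish exactly when $u_i \perp v_j$; a crucial feature of $\RR^3$, absent in $\RR^2$, is that the set of unit vectors orthogonal to a given unit vector is a full great circle, so one can realize highly nontrivial zero patterns (in particular, fooling-set-like configurations of arbitrary size). Restricting $u_i,v_j$ to a common two-plane reduces the submatrix to the slack matrix of $\S^2_+$, which by \eqref{eq:SOC2x2PSD} admits a trivial $\SOC^1$-lift, so the configurations must genuinely use the third dimension. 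I would design the families so that the zero/nonzero pattern of $M^{(n)}$ encodes a combinatorial invariant (a fooling number, or a rectangle cover number) that grows in $n$.

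The principal technical step, and the main obstacle I expect, is a lower bound of the form $\ranksoc(M^{(n)}) \to \infty$. Every SOC-rank-one atom can be written $F_{ij} = t_i s_j + \langle x_i,y_j\rangle$ with $(x_i,t_i),(y_j,s_j)\in\SOC$; as a matrix it has ordinary rank at most three, and its zero set is geometrically constrained, since $F_{ij}=0$ forces $t_i=\|x_i\|$, $s_j=\|y_j\|$ and $x_i,y_j$ antiparallel. Exploiting these constraints, one would like to argue — in the spirit of a fooling-set lower bound on nonnegative rank — that a single atom cannot simultaneously vanish on many distinguished zero pairs and be positive on many distinguished nonzero pairs of the construction, forcing the number of atoms to grow with $n$. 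The subtlety is that, unlike a rank-one nonnegative atom, a SOC atom has a two-dimensional continuous degree of freedom and ordinary rank up to three, so a naive fooling-set argument is defeated. I therefore expect the heart of the proof to be a refined lower-bound tool that combines the combinatorial zero pattern with a geometric/dimension-counting obstruction specific to SOC atoms; designing such a tool and then applying it to an explicit family $M^{(n)}$ is where the main work will lie.
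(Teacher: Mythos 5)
Your setup is the same as the paper's and is correct as far as it goes: reduce via the factorization theorem to showing that the slack matrix $S[u,v]=(u^{\tr}v)^2$ has unbounded second-order cone rank on finite submatrices, and attack the lower bound through the zero pattern of SOC-rank-one atoms, using the observation that $\langle a,b\rangle=0$ for nonzero $a,b\in\SOC$ forces both to lie on the boundary with antiparallel spatial parts. That observation is exactly the paper's Fact~\ref{fact:socorth}, whose key consequence (which you stop just short of drawing) is that a single nonzero $a\in\SOC$ orthogonal to two nonzero $b_1,b_2\in\SOC$ forces $b_1$ and $b_2$ to be \emph{collinear}. However, your proposal has a genuine gap: you explicitly defer both the choice of the family $M^{(n)}$ and the ``refined lower-bound tool,'' and these constitute essentially the entire content of the theorem. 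As written, the proposal establishes nothing beyond the (standard) reduction to slack-matrix factorizations.

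For comparison, here is how the paper fills that gap. The submatrices are not square fooling-set-style matrices but rectangular ones $A_n\in\RR^{\binom{n}{2}\times n}$: take $v_1,\dots,v_n$ on the moment curve, index columns by $j\in[n]$ via $v_jv_j^{\tr}$ and rows by pairs $e=\{i_1,i_2\}$ via $(v_{i_1}\times v_{i_2})(v_{i_1}\times v_{i_2})^{\tr}$, so that $A_n[e,j]=0$ iff $j\in e$. The relevant invariant is the covering number $\covsoc$, the least number of SOC-rank-one supports needed to cover the support of $A_n$; nonnegativity of the atoms forces each atom to vanish wherever $A_n$ does. The main lemma is that any single SOC-rank-one atom $M$ respecting this zero pattern must vanish identically on a submatrix $M[\binom{C}{2},C]$ with $|C|\geq n_0$ whenever $n\geq 3n_0^2$: the collinearity consequence of Fact~\ref{fact:socorth} shows the columns of $M$ are constant up to scale on each connected component of the graph whose edges are the nonzero rows, hence vanish on the whole clique over that component; if all components and the set of zero columns are small, a counting argument plus Tur\'an's theorem locates a large clique of identically-zero rows instead. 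Since $A_n[\binom{C}{2},C]$ has the sparsity pattern of $A_{n_0}$, this yields the recursion $\covsoc(A_{3n_0^2})\geq\covsoc(A_{n_0})+1$ and hence $\ranksoc(A_n)\geq\covsoc(A_n)\to\infty$. Your intuition that a ``naive fooling-set argument is defeated'' by the extra degrees of freedom of an SOC atom is accurate, and the component-plus-Tur\'an argument is precisely the replacement; without it (or some equivalent), the proof is incomplete.
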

Note that higher-dimensional second order cones of the form
\[
\{(x,t) \in \RR^{n} \times t : \|x\| \leq t\}
\]
where $n \geq 3$ can be represented using the three-dimensional cone $\SOC$, see e.g., \cite[Section 2]{ben2001polyhedral}.
Thus Theorem \ref{thm:main} also rules out any representation of $\S^3_+$ using the higher-dimensional second-order cones.
Moreover since $\S^3_+$ appears as a slice of higher-order positive semidefinite cones Theorem \ref{thm:main} also shows that one cannot represent $\S^n_+$, for $n \geq 3$ using second-order cones.

\section{Preliminaries}

The paper \cite{gouveia2011lifts} introduced a general methodology to prove existence or nonexistence of lifts in terms of the \emph{slack matrix} of a cone. In this section we review some of the definitions and results from this paper, and we introduce the notion of \emph{second-order cone factorization} and \emph{second-order cone rank}.

%
Recall that the dual of a cone $K$ living in Euclidean space $\E$ is defined as:
\[
K^* = \{x \in \E : \langle x, y \rangle \geq 0 \quad \forall y \in K \}.
\]
We also denote by $\ext(K)$ the extreme rays of a cone $K$. 
The notion of \emph{slack matrix} plays a fundamental role in the study of lifts.
\begin{definition}[Slack matrix]
The slack matrix of a cone $K$, denoted $S_K$, is a (potentially infinite) matrix where columns are indexed by extreme rays of $K$, and rows are indexed by extreme rays of $K^*$ (the dual of $K$) and where the $(x,y)$ entry is given by:
\begin{equation}
\label{eq:slackK}
S_K[x,y] = \langle x,y \rangle \quad \forall (x,y) \in \ext(K^*) \times \ext(K).
\end{equation}
\end{definition}

Note that, by definition of dual cone, all the entries of $S_K$ are nonnegative.
Also note that an element $x \in \ext(K^*)$ (and similarly $y \in \ext(K)$) is only defined up to a positive multiple. Any choice of scaling gives a valid slack matrix of $K$ and the properties of $S_K$ that we are interested in will be independent of the scaling chosen.

The existence/nonexistence of a second-order cone lift for a convex cone $K$ will depend on whether $S_K$ admits a certain \emph{second-order cone factorization} which we now define.

\begin{definition}[$\SOC^k$-factorization and second-order cone rank]
\label{def:socrank}
Let $S \in \RR^{I\times J}$ be a matrix with nonnegative entries. We say that $S$ has a $\SOC^k$-factorization if there exist vectors $a_i \in \SOC^k$ for $i\in I$ and $b_j \in \SOC^k$ for $j \in J$ such that $S[i,j] = \langle a_i, b_j \rangle$ for all $i \in I$ and $j \in J$. The smallest $k$ for which such a factorization exists will be denoted $\ranksoc(S)$.
\end{definition}

\begin{remark}
\label{rem:socrankdecomp}
It is important to note that the second-order cone rank of any matrix $S$ can be equivalently expressed as the smallest $k$ such that $S$ admits a decomposition
\begin{equation}
\label{eq:Ssocdecomp}
S = M_1 + \dots + M_k
\end{equation}
where $\ranksoc(M_l) = 1$ for each $l=1,\dots,k$ (i.e., each $M_l$ has a factorization $M_l[i,j] = \langle a_i, b_j \rangle$ where $a_i,b_j \in \SOC$). This simply follows from the fact that $\SOC^k$ is the Cartesian product of $k$ copies of $\SOC$.
\end{remark}
We now state the main theorem from \cite{gouveia2011lifts} that we will need. Recall that a cone $K$ is called proper if it is closed, convex, full-dimensional and such that $K^*$ is also full-dimensional.
\begin{theorem}[{Existence of a lift, special case of \cite{gouveia2011lifts}}]
\label{thm:liftslackmatrix}
A proper cone $K$ has a $\SOC^k$-lift if and only if its slack matrix $S_K$ has a $\SOC^k$-factorization.
\end{theorem}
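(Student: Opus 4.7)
I would follow the standard Gouveia--Parrilo--Thomas argument of \cite{gouveia2011lifts} specialized to $C=\SOC^k$. Two structural features of $C$ carry the proof: self-duality $C^*=C$, and the dual-sum identity $(C\cap L)^*=C+L^\perp$ valid under a Slater-type regularity condition on $L$.

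\emph{From factorization to lift.} Given a factorization $S_K[x,y]=\langle a_x,b_y\rangle$ with $a_x,b_y\in \SOC^k$, I would pick $x_1,\dots,x_m\in\ext(K^*)$ spanning $\RR^m$ (possible because $K^*$ is full-dimensional) and define $\pi:\RR^{3k}\to\RR^m$ by letting $\pi(b)$ be the unique $y$ with $\langle x_i,y\rangle=\langle a_{x_i},b\rangle$ for every $i$. Let $L\subseteq\RR^{3k}$ be the linear subspace cut out by the consistency equations $\langle a_x,b\rangle=\langle x,\pi(b)\rangle$ ranging over all $x\in\ext(K^*)$. The inclusion $K\subseteq\pi(\SOC^k\cap L)$ is checked via the conic Carath\'eodory representation $y=\sum_i\lambda_i y_i$ with $\lambda_i\ge 0$ and $y_i\in\ext(K)$: the lifted point $b:=\sum_i\lambda_i b_{y_i}$ lies in $\SOC^k\cap L$ by the factorization and satisfies $\pi(b)=y$. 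For the reverse inclusion, any $b\in\SOC^k\cap L$ satisfies $\langle x,\pi(b)\rangle=\langle a_x,b\rangle\ge 0$ for every $x\in\ext(K^*)$ by self-duality of $\SOC^k$, so $\pi(b)\in K^{**}=K$ (using that $K$ is proper).

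\emph{From lift to factorization.} Given $K=\pi(\SOC^k\cap L)$, I would for each $y\in\ext(K)$ pick any $b_y\in\SOC^k\cap L$ with $\pi(b_y)=y$. For every $x\in\ext(K^*)$ the linear functional $b\mapsto\langle\pi^*(x),b\rangle=\langle x,\pi(b)\rangle$ is nonnegative on $\SOC^k\cap L$, so $\pi^*(x)\in(\SOC^k\cap L)^*$. The dual-sum identity then yields a decomposition $\pi^*(x)=a_x+l_x$ with $a_x\in \SOC^k$ and $l_x\in L^\perp$; pairing with $b_y\in L$ gives $\langle a_x,b_y\rangle=\langle \pi^*(x),b_y\rangle=\langle x,\pi(b_y)\rangle=\langle x,y\rangle$, producing the desired $\SOC^k$-factorization.

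\emph{Main obstacle.} The subtle step is the dual-sum identity $(\SOC^k\cap L)^*=\SOC^k+L^\perp$ used in the second direction, which in general requires a regularity hypothesis so that one can avoid taking a closure on the right. I would arrange this by first replacing $L$ with $\linspan(\SOC^k\cap L)$, which leaves $\SOC^k\cap L$ unchanged but ensures that this cone has nonempty relative interior in its ambient subspace; under this condition the dual-sum identity, and the automatic closedness of the sum $\SOC^k+L^\perp$, follow from standard convex-analytic arguments.
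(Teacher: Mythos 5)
The paper does not actually prove this theorem: it is quoted as a special case of \cite{gouveia2011lifts}, so your proposal is being measured against the standard Gouveia--Parrilo--Thomas argument, which is indeed the route you take. The ``factorization $\Rightarrow$ lift'' direction is fine as written (conic Carath\'eodory needs $K$ closed and pointed, which follows from $K$ being proper). The gap is in your handling of the step you yourself single out as the main obstacle. Replacing $L$ by $\linspan(\SOC^k\cap L)$ does \emph{not} deliver the Slater-type condition your argument needs, namely $L\cap\operatorname{int}(\SOC^k)\neq\emptyset$ (equivalently $\operatorname{ri}(\SOC^k)\cap\operatorname{ri}(L)\neq\emptyset$, which is what the closure-free dual-sum identity of Rockafellar requires). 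What the replacement guarantees is only that $\SOC^k\cap L$ has nonempty relative interior inside its own affine hull --- but every nonempty convex set has that, so the condition is vacuous. Concretely, take $k=1$ and let $L$ be the line spanned by the boundary point $(1,0,1)$ of $\SOC$: then $L=\linspan(\SOC\cap L)$ and yet $L\cap\operatorname{int}(\SOC)=\emptyset$, so the regularity hypothesis you invoke simply fails.

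The identity $(\SOC^k\cap L)^*=\SOC^k+L^\perp$ \emph{is} true whenever $L=\linspan(\SOC^k\cap L)$, but proving it takes more than the stated ``standard convex-analytic arguments'': one passes to the minimal face $G$ of $\SOC^k$ containing $\SOC^k\cap L$ (which does satisfy $\operatorname{ri}(G)\cap L\neq\emptyset$), obtains $(\SOC^k\cap L)^*=(G\cap L)^*=G^*+L^\perp$ without closure, and must then show $G^*\subseteq\SOC^k+G^\perp$, i.e., that $\SOC^k+G^\perp$ is closed for every face $G$ --- the statement that $\SOC^k$ is a \emph{nice} cone. This is a genuine extra ingredient: for a general closed convex cone $C$ in place of $\SOC^k$, the identity $(C\cap L)^*=C^*+L^\perp$ can fail even when $L=\linspan(C\cap L)$. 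It is exactly the issue that \cite{gouveia2011lifts} handle by proving the ``lift $\Rightarrow$ factorization'' direction only for \emph{proper} lifts (those with $L\cap\operatorname{int}(C)\neq\emptyset$) and then showing separately that a lift through a nice cone can be converted into a proper lift through one of its faces; for $\SOC^k$ the faces are products of copies of $\SOC$, extreme rays and $\{0\}$, all of which re-embed isometrically into some $\SOC^{k'}$ with $k'\le k$, so the conclusion survives. In short: right skeleton, correctly identified obstacle, but the fix as stated does not close it.
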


\paragraph{The cone $\S^3_+$} In this paper we are interested in the cone $K = \S^3_+$ of real symmetric $3\times 3$ positive semidefinite matrices. Note that the extreme rays of $\S^3_+$ are rank-one matrices of the form $xx^T$ where $x \in \RR^3$. Also note that $\S^3_+$ is self-dual, i.e., $(\S^3_+)^* = \S^3_+$. The slack matrix of $\S^3_+$ thus has its rows and columns indexed by three-dimensional vectors and
\begin{equation}
\label{eq:slackmatrixS3+}
S_{\S^3_+}[x,y] = \langle xx^T, yy^T \rangle = \left(x^T y\right)^2 \quad \forall (x,y) \in \RR^3 \times \RR^3.
\end{equation}
In order to prove that $\S^3_+$ does not admit a second-order representation, we will show that its slack matrix does not admit any $\SOC^k$-factorization for any finite $k$. In fact we will exhibit a sequence $(A_n)$ of submatrices of $S_{\S^3_+}$ where $\ranksoc(A_n)$ grows to $+\infty$ as $n\rightarrow +\infty$.

Before introducing this sequence of matrices we record the following simple fact concerning orthogonal vectors in the cone $\SOC$ which will be useful later.


\begin{fact}
\label{fact:socorth}
Assume $a,b_1,b_2 \in \SOC$ are all nonzero and $\langle a,b_1 \rangle = \langle a,b_2 \rangle = 0$. Then necessarily $b_1$ and $b_2$ are collinear.
\end{fact}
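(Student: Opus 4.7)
The plan is to exploit the geometry of the second-order cone: orthogonality to a nonzero vector $a \in \SOC$ can only be achieved by vectors sitting on a single extreme ray of $\SOC$, namely the one diametrically opposite to $a$ on the boundary.

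First, I would write every vector in the split form dictated by the definition of $\SOC$: $a = (a', s)$ with $a' \in \RR^2$ and $\|a'\| \le s$, and similarly $b_i = (b_i', t_i)$ for $i = 1, 2$. Since $a$ is nonzero and lies in $\SOC$, we must have $s > 0$, and likewise $t_1, t_2 > 0$. The orthogonality condition $\langle a, b_i \rangle = 0$ then reads
\[
a' \cdot b_i' = -s\, t_i.
\]

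Next I would apply Cauchy--Schwarz together with the cone inequalities to obtain the chain
\[
s\, t_i = -\,a' \cdot b_i' \;\leq\; |a' \cdot b_i'| \;\leq\; \|a'\|\,\|b_i'\| \;\leq\; s\, t_i.
\]
Every inequality must therefore be an equality. In particular $\|a'\| = s$, $\|b_i'\| = t_i$ (so $a$ and each $b_i$ sit on the boundary of $\SOC$), and the Cauchy--Schwarz equality case forces $b_i'$ to be a scalar multiple of $a'$. Combining this with the sign information $a'\cdot b_i' = -\|a'\|\|b_i'\|$ pins down
\[
b_i' = -\tfrac{t_i}{s}\, a', \qquad \text{hence} \qquad b_i = \tfrac{t_i}{s}\,(-a',\, s).
\]
Thus $b_1$ and $b_2$ are both positive multiples of the single vector $(-a',s)$, which proves collinearity.

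The only subtlety to watch is the degenerate case $a' = 0$: then $a = (0,s)$ with $s > 0$, and $\langle a, b_i\rangle = s\, t_i = 0$ would force $t_i = 0$ and hence $b_i = 0$, contradicting the nonzero assumption; so this case does not actually arise. There is no real obstacle here — the argument is essentially a one-step Cauchy--Schwarz equality analysis — and the main thing to be careful about is simply recording the sign to conclude that $b_1$ and $b_2$ are multiples of the \emph{same} vector rather than of $\pm a'$ independently.
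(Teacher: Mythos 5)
Your proof is correct and follows essentially the same route as the paper's: split each vector into its $\RR^2$ and scalar parts, run the Cauchy--Schwarz chain to force equality everywhere, and conclude that both $b_1$ and $b_2$ are positive multiples of the single boundary vector $(-a',s)$. The only difference is cosmetic (you additionally make explicit that the case $a'=0$ cannot occur, which the paper leaves implicit).
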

\begin{proof}
This is easy to see geometrically by visualizing the ``ice cream'' cone. We include a proof for completeness: let $a = (a',t) \in \RR^2 \times \RR$ and $b_i = (b'_i,s_i) \in \RR^2 \times \RR$ where $\|a'\|\leq t$ and $\|b'_i\| \leq s_i$. Note that for $i=1,2$ we have $0 = \langle a, b_i \rangle = \langle a', b'_i \rangle + t s_i \geq - \|a'\| \|b'_i\| + t s_i \geq 0$ where in the first inequality we used Cauchy-Schwarz and in the second inequality we used the definition of the second-order cone. It thus follows that all the inequalities must be equalities: by the equality case in Cauchy-Schwarz we must have that $b'_i = \alpha_i a'$ for some constant $\alpha_i$ and we must also have $t = \|a'\|$ and $s_i = \|b'_i\|$ (note that $\alpha_i < 0$). Thus we get that $b_i = (\alpha_i a', |\alpha_i| \|a'\|) = |\alpha_i| (-a',\|a'\|)$. This shows that $b_1$ and $b_2$ are both collinear to the same vector $(-a',\|a'\|)$ and thus completes the proof.
\end{proof}


\section{Proof}
\label{sec:proof}

\paragraph{A sequence of matrices} We now define our sequence $A_n$ of submatrices of the slack matrix of $\S^3_+$. For any integer $i$ define the vector
\begin{equation}
\label{eq:momcurve}
v_i = (1,i,i^2) \in \RR^3.
\end{equation}
Note that this sequence of vectors satisfies the following:
\begin{equation}
\label{eq:propvis}
\text{For all distinct integers } i_1,i_2,i_3 \;  \det(v_{i_1},v_{i_2},v_{i_3}) \neq 0.
\end{equation}
Our matrix $A_n$ has size $\binom{n}{2}\times n$ and is defined as follows (rows are indexed by 2-subsets of $[n]$ and columns are indexed by $[n]$):
\begin{equation}
A_n[\{i_1,i_2\},j] := \left(( v_{i_1} \times v_{i_2})^T v_j \right)^2 = \det(v_{i_1},v_{i_2},v_j)^2 \quad \forall \{i_1,i_2\} \in \binom{[n]}{2}, \; \forall j \in [n]
\end{equation}
where $\times$ denotes the cross-product of three-dimensional vectors. It is clear from the definition of $A_n$ that it is a submatrix of the slack matrix of $\S^3_+$.
Note that the sparsity pattern of $A_n$ satisfies the following:
\begin{equation}
\label{eq:sparsityAn}
\begin{aligned}
& A_n[e,j] = 0 \text{ if } j \in e\\
& A_n[e,j] > 0 \text{ otherwise}
\end{aligned}
\qquad e \in \binom{[n]}{2}, \; j \in [n].
\end{equation}
Also note that $A_n$ satisfies the following important recursive property: for any subset $C$ of $[n]$ of size $n_0$ the submatrix $A_n[\binom{C}{2},C]$ has the same sparsity pattern as $A_{n_0}$. In our main theorem we will show that the second-order cone rank of $A_n$ grows to infinity with $n$.

\begin{remark}
The specific choice of $v_i$ in \eqref{eq:momcurve} is not important, as long as it satisfies \eqref{eq:propvis}, since the only property that we will use about $A_n$ is its sparsity pattern. For example another choice for $v_i$ that we could use is
\begin{equation}
\label{eq:vistrig}
v_i = (1,\cos(\theta_i),\sin(\theta_i)) \in \RR^3
\end{equation}
where $\theta_1,\theta_2,\ldots$ is any increasing sequence in $[0,2\pi)$. In fact using this choice of $v_i$ we will argue later (Section \ref{sec:slices}) that a certain slice of $\S^3_+$ does not admit a second-order cone lift.
\end{remark}

\paragraph{Covering numbers}
Our analysis of the matrix $A_n$ will only rely on its sparsity pattern. Given two matrices $A$ and $B$ of the same size we write $A \eqsupp B$ if $A$ and $B$ have the same support (i.e., $A_{ij} = 0$ if and only if $B_{ij} = 0$ for all $i,j$). We now define a combinatorial analogue of the \emph{second-order cone rank}:
\begin{definition}
\label{def:covsoc}
Given a nonnegative matrix $A$, we define the $\SOCn$-covering number of $A$, denoted $\covsoc(A)$ to be the smallest number $k$ of matrices $M_1,\dots,M_k$ with $\ranksoc(M_l) = 1$ for $l=1,\dots,k$ that are needed to cover the nonzero entries of $A$, i.e., such that 
\begin{equation}
\label{eq:soccoveringA}
A \eqsupp M_1+\dots+M_k.
\end{equation}
\end{definition}

\begin{fact}
For any nonnegative matrix $A \in \RR^{I\times J}_+$ we have $\ranksoc(A) \geq \covsoc(A)$.
\end{fact}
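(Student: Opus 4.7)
The plan is to extract an $\SOCn$-covering directly from an optimal $\SOC^k$-factorization. Let $k = \ranksoc(A)$. By Remark~\ref{rem:socrankdecomp}, I can write
\[
A = M_1 + \dots + M_k
\]
where each $M_l$ has $\ranksoc(M_l) = 1$, i.e.\ $M_l[i,j] = \langle a_i^{(l)}, b_j^{(l)} \rangle$ with $a_i^{(l)}, b_j^{(l)} \in \SOC$.

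The key observation is that each matrix $M_l$ is entrywise nonnegative. This is immediate because $\SOC$ is self-dual: for any two vectors $u, v \in \SOC$ we have $\langle u, v \rangle \geq 0$ by definition of the dual cone, and $\SOC^* = \SOC$.

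Once I know that all the $M_l$'s are nonnegative matrices, the support of their sum is exactly the union of their individual supports, with no cancellations possible. Hence
\[
\supp(A) = \supp(M_1) \cup \dots \cup \supp(M_k),
\]
which is precisely the statement $A \eqsupp M_1 + \dots + M_k$. This exhibits a covering of the nonzero entries of $A$ by $k$ rank-one (in the $\SOC$ sense) matrices, proving $\covsoc(A) \leq k = \ranksoc(A)$.

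There is no genuine obstacle here; the only thing one has to be careful about is invoking self-duality of $\SOC$ to ensure that the summands in the factorization are themselves nonnegative, so that the support of $A$ coincides with (and is not strictly contained in) the union of supports of the $M_l$'s.
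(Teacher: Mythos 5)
Your proof is correct and is essentially the paper's own argument, which simply cites Remark~\ref{rem:socrankdecomp} and the definition of $\covsoc$. In fact you prove slightly more than needed: since the decomposition $A = M_1 + \dots + M_k$ is an exact equality, $A \eqsupp M_1 + \dots + M_k$ holds trivially, and the self-duality observation is only needed to confirm that each $M_l$ is a nonnegative matrix with $\ranksoc(M_l)=1$, hence an admissible term in a covering.
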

\begin{proof}
This follows immediately from Remark \ref{rem:socrankdecomp} concerning $\ranksoc$ and the definition of $\covsoc$.
\end{proof}

A simple but crucial fact concerning $\SOCn$-coverings that we will use is the following: in any $\SOCn$-covering of $A$ of the form \eqref{eq:soccoveringA}, each matrix $M_l$ must satisfy $M_l[i,j] = 0$ whenever $A[i,j] = 0$. This is because the matrices $M_1,\dots,M_k$ are all entrywise nonnegative.


We are now ready to state our main result.

\begin{theorem}
\label{thm:covsocAn}
Consider a sequence $(A_n)$ of matrices of sparsity pattern given in \eqref{eq:sparsityAn}.
Then for any $n_0 \geq 2$ we have $\covsoc(A_{3n_0^2}) \geq \covsoc(A_{n_0})+1$. As a consequence $\covsoc(A_{n})\rightarrow +\infty$ when $n\rightarrow +\infty$.
\end{theorem}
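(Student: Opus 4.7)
I will start with an optimal $\SOCn$-covering $A_{3n_0^2} \eqsupp \sum_{l=1}^k M_l$ with $k = \covsoc(A_{3n_0^2})$, writing each rank-one factor as $M_l[e,j] = \langle a_e^{(l)}, b_j^{(l)} \rangle$ where $a_e^{(l)}, b_j^{(l)} \in \SOC$. The strategy is to locate a single factor $M_{l_0}$ and a subset $C \subset [3n_0^2]$ with $|C|=n_0$ such that $M_{l_0}|_{\binom{C}{2}\times C}=0$. Since $A_{3n_0^2}[\binom{C}{2},C]$ has the same sparsity pattern as $A_{n_0}$, restricting the remaining $k-1$ factors to this submatrix would then cover (the support of) $A_{n_0}$, yielding $\covsoc(A_{n_0}) \leq k-1$ and hence the recursive inequality.

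To find $M_{l_0}$ and $C$, I would fix any $l$ and partition $[3n_0^2]$ by the location of $b_j^{(l)}$ in $\SOC$: let $Z_l$, $I_l$, and $C_l(r)$ collect the indices $j$ with $b_j^{(l)} = 0$, $b_j^{(l)} \in \mathrm{int}(\SOC)$, and $b_j^{(l)}$ on the extreme ray $r$, respectively. Combining the forced zeros $\langle a_e^{(l)}, b_i^{(l)} \rangle = \langle a_e^{(l)}, b_j^{(l)} \rangle = 0$ for $e=\{i,j\}$ with Fact \ref{fact:socorth} (and the single-vector variant that drops out of its proof, showing that a nonzero vector in $\SOC$ has no nonzero $\SOC$-orthogonal element when it lies in the interior) yields three structural constraints: (i) if $i$ or $j$ lies in $I_l$, then $a_e^{(l)} = 0$; (ii) if $i \in C_l(r_1)$ and $j \in C_l(r_2)$ with $r_1 \neq r_2$, then $a_e^{(l)} = 0$; (iii) if $i,j \in C_l(r)$, then $a_e^{(l)}$ is a nonnegative multiple of the unique extreme ray of $\SOC$ orthogonal to $r$.

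The key step is then a pigeonhole applied to $|Z_l| + |I_l| + \sum_r |C_l(r)| = 3n_0^2$: at least one of the three sums is $\geq n_0^2$. If $|Z_l| \geq n_0^2$, any $C \subset Z_l$ of size $n_0$ works, since $b_k^{(l)} = 0$ for $k \in C$. If $|I_l| \geq n_0^2$, any $C \subset I_l$ of size $n_0$ works, since (i) kills every $a_e^{(l)}$ with $e\subset C$. Otherwise $\sum_r |C_l(r)| \geq n_0^2$, and I would branch: if some $|C_l(r)| \geq n_0$, take $C \subset C_l(r)$ and use (iii) together with $b_k^{(l)} \in r$ to see the inner products vanish; otherwise each $|C_l(r)| \leq n_0-1$, so the number of nonempty ray-classes exceeds $n_0^2/(n_0-1) > n_0$, and picking one element from each of $n_0$ distinct such classes gives a $C$ on which (ii) kills every $a_e^{(l)}$ with $e\subset C$. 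In every case we obtain the desired $C$, establishing $\covsoc(A_{3n_0^2}) \geq \covsoc(A_{n_0})+1$.

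For the ``as a consequence'' claim I would use monotonicity $\covsoc(A_n) \leq \covsoc(A_{n'})$ for $n \leq n'$ (the sparsity of $A_n$ matches that of a submatrix of $A_{n'}$, and covers restrict), and iterate the recursive bound from the trivial base $\covsoc(A_2) = 0$ along $n_{j+1} = 3 n_j^2$ to get $\covsoc(A_{n_j}) \geq j$, hence $\covsoc(A_n) \to +\infty$. The main obstacle is the per-factor analysis (i)--(iii): once it is in hand, the factor $3$ in $3n_0^2$ is exactly what the three-way outer pigeonhole among $Z_l$, $I_l$, and $\bigcup_r C_l(r)$ demands, and a further pigeonhole within the ray classes produces the clean $n_0$-element selection.
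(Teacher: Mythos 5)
Your proposal is correct, and while its outer skeleton matches the paper's --- isolate one $\ranksoc$-one factor that vanishes on a block $\binom{C}{2}\times C$ with $|C|=n_0$, note that $A_{3n_0^2}[\binom{C}{2},C]\eqsupp A_{n_0}$, and peel that factor off (this is exactly the paper's Lemma \ref{lem:main} plus the short deduction of Theorem \ref{thm:covsocAn} from it) --- your proof of the key lemma takes a genuinely different route. The paper forms the graph on $S_1=\{j: b_j\neq 0\}$ with edges $E_1(S_1)$, shows via Fact \ref{fact:socorth} that each connected component yields a vanishing block, and then, when $S_0$ and all components are small, counts edges to conclude that $E_0=\{e: a_e=0\}$ is dense enough for Tur\'an's theorem to supply a clique of size $n_0$. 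You instead classify the columns by the position of $b_j$ in the cone (zero, interior, or a specific extreme ray), observe that an edge touching an interior column or straddling two distinct rays forces $a_e=0$ while an edge inside a single ray class confines $a_e$ to the orthogonal extreme ray, and finish with a two-level pigeonhole on $3n_0^2=|Z_l|+|I_l|+\sum_r|C_l(r)|$. Your structural claims (i)--(iii) are correct consequences of (the proof of) Fact \ref{fact:socorth} together with the standard fact that a nonzero element of $\SOC$ has strictly positive inner product with every interior point, and the arithmetic checks out: in the last sub-case $(n_0-1)^2<n_0^2$ guarantees at least $n_0$ nonempty ray classes. What you gain is a fully elementary, self-contained argument that avoids Tur\'an's theorem and makes the cone geometry explicit; what the paper's route gains is that it only ever uses the collinearity conclusion of Fact \ref{fact:socorth} (no interior/boundary case split) and delegates the combinatorics to a standard extremal graph theory result. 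Your treatment of the ``as a consequence'' part, via monotonicity of $\covsoc(A_n)$ in $n$ and iteration from $\covsoc(A_2)=0$, is also sound.
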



The proof of our theorem rests on a key lemma concerning the sparsity pattern of any term in a $\SOCn$-covering of $A_n$. 

\begin{lemma}[Main]
\label{lem:main}
Let $n$ such that $n \geq 3n_0^2$ for some $n_0 \geq 2$. Assume $M \in \RR^{\binom{n}{2} \times n}$ satisfies $\ranksoc(M) = 1$ and $M[e,j] = 0$ for all $e\in \binom{n}{2}$ and $j \in [n]$ such that $j \in e$. Then there is a subset $C$ of $[n]$ of size at least $n_0$ such that the submatrix $M[\binom{C}{2},C]$ is identically zero.
\end{lemma}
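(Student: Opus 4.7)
The plan is to unfold the hypothesis $\ranksoc(M)=1$ into a factorization $M[e,j]=\langle a_e,b_j\rangle$ with $a_e,b_j\in\SOC$, and then combine the sparsity pattern $M[e,j]=0$ for $j\in e$ with Fact~\ref{fact:socorth} to organize the column indices by the ``direction'' of their $b_j$. Explicitly, the sparsity condition reads $\langle a_{\{i_1,i_2\}},b_{i_1}\rangle=\langle a_{\{i_1,i_2\}},b_{i_2}\rangle=0$; together with Fact~\ref{fact:socorth} this forces $a_{\{i_1,i_2\}}=0$ whenever $b_{i_1}$ and $b_{i_2}$ are both nonzero and non-collinear, in which case the row of $M$ indexed by $\{i_1,i_2\}$ vanishes entirely.

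I would then let $Z=\{j\in[n]:b_j=0\}$ and partition $[n]\setminus Z$ into equivalence classes $B_1,\dots,B_m$ under collinearity of the $b_j$ (collinear here meaning positive scalar multiple, since two nonzero collinear vectors in $\SOC$ must point in the same direction). A short pigeonhole on this partition produces the desired $C$ in three subcases. First, if $|Z|\geq n_0$, any $C\subseteq Z$ of size $n_0$ works, as $b_j=0$ trivializes the block $M[\binom{C}{2},C]$. Second, if some class $B_\ell$ has $|B_\ell|\geq n_0$, take $C\subseteq B_\ell$ of size $n_0$: for $\{i_1,i_2\}\subset C$ and $j\in C$, the vector $b_j$ is a positive multiple of $b_{i_1}$, so $M[\{i_1,i_2\},j]$ is proportional to $M[\{i_1,i_2\},i_1]=0$. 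Third, if $|Z|\leq n_0-1$ and $|B_\ell|\leq n_0-1$ for every $\ell$, then $m(n_0-1)\geq n-n_0+1$, so $m\geq (n-n_0+1)/(n_0-1)\geq n_0$ with comfortable room under the hypothesis $n\geq 3n_0^2$; picking one representative from each of $n_0$ distinct classes yields a set $C$ where every pair $\{i_1,i_2\}\subset C$ crosses two classes, and the observation above then gives $a_{\{i_1,i_2\}}=0$, so all of $M[\binom{C}{2},C]$ is zero.

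The step requiring the most care is the appeal to Fact~\ref{fact:socorth}, whose hypotheses demand that $b_{i_1}$ and $b_{i_2}$ be nonzero and non-collinear in $\SOC$; this is exactly what motivates stripping $Z$ off first and then taking at most one representative per collinearity class. Once the partition $[n]=Z\sqcup B_1\sqcup\dots\sqcup B_m$ is in place, the three-case pigeonhole is routine, and in each case the vanishing of $M[\binom{C}{2},C]$ follows either directly from $b_j=0$, or from collinearity plus the given sparsity of $M$, or from Fact~\ref{fact:socorth} forcing $a_{\{i_1,i_2\}}=0$.
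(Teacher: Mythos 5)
Your proof is correct, and its combinatorial endgame is genuinely different from (and simpler than) the paper's. Both arguments start the same way: factor $M[e,j]=\langle a_e,b_j\rangle$, strip off the zero columns $Z=\{j:b_j=0\}$, and use Fact~\ref{fact:socorth} to exploit collinearity of the $b_j$'s. The paper, however, only uses Fact~\ref{fact:socorth} in one direction: it forms the graph on $S_1=[n]\setminus Z$ whose edges are the pairs $e$ with $a_e\neq 0$, shows that each connected component consists of mutually collinear columns (hence gives a zero block if large), and then, when all components and $Z$ are small, it bounds $|E_1|$ by roughly $\frac{3}{2}n_0 n$ and invokes Tur\'an's theorem to find an $n_0$-clique inside $E_0=\{e:a_e=0\}$. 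You instead use Fact~\ref{fact:socorth} in both directions: besides grouping columns into collinearity classes $B_1,\dots,B_m$, you observe that any pair $\{i_1,i_2\}$ straddling two distinct classes forces $a_{\{i_1,i_2\}}=0$ outright, so when all classes and $Z$ are small you can \emph{construct} the clique of zero rows explicitly by taking one representative per class --- no edge counting and no Tur\'an. This buys a cleaner three-case pigeonhole and in fact a better threshold (your Case 3 only needs $m\ge n_0$, i.e.\ $n\ge n_0^2-1$ or so, comfortably implied by $n\ge 3n_0^2$), at the cost of nothing; the only points requiring care, which you handle correctly, are that collinearity is an equivalence relation on the nonzero $b_j$ (with positive scalars, since $\SOC$ is pointed) and that the contrapositive of Fact~\ref{fact:socorth} needs $b_{i_1},b_{i_2}$ nonzero and non-collinear, which your construction guarantees.
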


Before proving this lemma, we show how this lemma can be used to easily prove Theorem \ref{thm:covsocAn}.

\begin{proof}[Proof of Theorem \ref{thm:covsocAn}]
Let $n = 3n_0^2$ and consider a $\SOCn$-covering of $A_n \eqsupp M_1 + \dots + M_r$ of size $r = \covsoc(A_n)$ (note that we have of course $r \geq 1$ since $A_n$ is not identically zero). By Lemma \ref{lem:main} there is a subset $C$ of $[n]$ of size $n_0$ such that $M_1[\binom{C}{2},C] = 0$. It thus follows that we have $A_n[\binom{C}{2},C] \eqsupp M_2[\binom{C}{2},C] + \dots + M_r[\binom{C}{2},C]$. Also note that $A_n[\binom{C}{2},C] \eqsupp A_{n_0}$. It thus follows that $A_{n_0}$ has a $\SOCn$-covering of size $r-1$ and thus $\covsoc(A_{n_0}) \leq \covsoc(A_{3n_0^2})-1$. This completes the proof.
\end{proof}
For completeness we show how Theorem \ref{thm:main} follows directly from Theorem \ref{thm:covsocAn}.
\begin{proof}[Proof of Theorem \ref{thm:main}]
Since for any $n \geq 1$, $A_n$ is a submatrix of the slack matrix of $\S^3_+$, Theorem \ref{thm:covsocAn} shows that the slack matrix of $\S^3_+$ does not admit any $\SOC^k$-factorization for finite $k$.  This shows, via Theorem \ref{thm:liftslackmatrix}, that $\S^3_+$ does not have a $\SOC^k$-lift for any finite $k$.
\end{proof}

The rest of the paper is devoted to prove Lemma \ref{lem:main}.

\begin{proof}[Proof of Lemma 1]
Let $M \in \RR^{\binom{n}{2}\times n}$ and assume that $M$ has a factorization $M_{e,j} = \langle a_e, b_j \rangle$ where $a_e, b_j \in \SOC$ for all $e \in \binom{[n]}{2}$ and $j \in [n]$, and that $M_{e,j} = 0$ whenever $j \in e$.

Let $E_0 := \{e \in \binom{[n]}{2} : a_e = 0\}$ be the set of rows of $M$ that are identically zero and let $E_1 = \binom{[n]}{2} \setminus E_0$. Similarly for the columns we let $S_0 := \{j \in [n] : b_j = 0\}$ and $S_1 = [n] \setminus S_0$.

In the next lemma we use the sparsity pattern of $A_n$ together with Fact \ref{fact:socorth} to infer additional properties on the sparsity pattern of $M$.

\begin{lemma}
\label{lem:Msparsity}
Let $C$ be a connected component of the graph with vertex set $S_1$ and edge set $E_1(S_1)$ (where $E_1(S_1)$ consists of elements in $E_1$ that connect only elements of $S_1$). Then necessarily $M[\binom{C}{2},C] = 0$.
\end{lemma}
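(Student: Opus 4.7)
The plan is to combine Fact \ref{fact:socorth} with the assumed zero pattern of $M$ to force all the column vectors $b_j$ for $j \in C$ to be scalar multiples of a single nonzero vector $v \in \SOC$, and then observe that this collinearity makes every entry of the submatrix $M[\binom{C}{2}, C]$ vanish.

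First I would examine a single edge. For any $e = \{i_1, i_2\} \in E_1(S_1)$ the hypotheses $M[e, i_1] = M[e, i_2] = 0$ translate, via the factorization $M_{e,j} = \langle a_e, b_j\rangle$, to $\langle a_e, b_{i_1}\rangle = \langle a_e, b_{i_2}\rangle = 0$. Since $a_e$ is nonzero by the definition of $E_1$ and both $b_{i_1}, b_{i_2}$ are nonzero by the definition of $S_1$, Fact \ref{fact:socorth} applies with $a := a_e$ and $b_1 := b_{i_1}$, $b_2 := b_{i_2}$, and forces $b_{i_1}$ and $b_{i_2}$ to be collinear.

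Next I would propagate this collinearity across the whole component. Since collinearity is a transitive relation on nonzero vectors, a straightforward induction along paths in the connected component $C$ (whose edges are precisely in $E_1(S_1)$, so that the previous paragraph applies to each of them) produces a single nonzero vector $v \in \SOC$ such that $b_j \in \RR v$ for every $j \in C$.

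Finally I would deduce the conclusion by fixing an arbitrary pair $e' = \{i_1, i_2\} \in \binom{C}{2}$ and index $j \in C$ and showing $M[e', j] = \langle a_{e'}, b_j\rangle = 0$. If $e' \in E_0$ then $a_{e'} = 0$ and the entry vanishes automatically. Otherwise $e' \in E_1$, and since both endpoints of $e'$ lie in $C \subseteq S_1$ we have $e' \in E_1(S_1)$, so the first step gives $a_{e'} \perp b_{i_1}$; because $b_{i_1}$ is a nonzero multiple of $v$ while every $b_j$ ($j \in C$) is also a multiple of $v$, it follows that $\langle a_{e'}, b_j\rangle = 0$. I do not anticipate a serious obstacle: the argument is essentially a direct application of Fact \ref{fact:socorth} chained with transitivity of collinearity. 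The only care required is to separate out the degenerate case $e' \in E_0$ (where there is nothing to prove) and to verify that any $e' \in E_1 \cap \binom{C}{2}$ automatically lies in $E_1(S_1)$ so that the per-edge collinearity from the first step is applicable.
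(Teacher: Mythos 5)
Your proposal is correct and follows essentially the same route as the paper: apply Fact \ref{fact:socorth} edge by edge, propagate collinearity of the $b_j$ across the connected component by transitivity, and then kill each entry of $M[\binom{C}{2},C]$ by splitting into the cases $a_{e'}=0$ and $a_{e'}\neq 0$. The only cosmetic difference is that the paper phrases the final step as ``proportional columns have the same sparsity pattern'' rather than explicitly writing $\langle a_{e'}, b_j\rangle$ as a multiple of $\langle a_{e'}, b_{i_1}\rangle$, which is the same computation.
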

\begin{proof}
We first show using Fact \ref{fact:socorth} that all the vectors $\{b_j\}_{j \in C}$ are necessarily collinear. Let $j_1,j_2 \in S_1$ such that $e = \{j_1,j_2\} \in E_1$. Note that since $M_{e,j_1} = M_{e,j_2} = 0$ then we have, by Fact \ref{fact:socorth} that $b_{j_1}$ and $b_{j_2}$ are collinear. It is easy to see thus now that if $j_1$ and $j_2$ are connected by a path in the graph $(S_1,E_1(S_1))$ then $b_{j_1}$ and $b_{j_2}$ must be collinear.

We thus get that all the columns of $M$ indexed by $C$ must be proportional to each other, and so they must have the same sparsity pattern. Now let $e \in \binom{C}{2}$. If $a_e = 0$ then $M[e,C] = 0$ since the entire row $e$ is zero. Otherwise if $a_e \neq 0$ let $e=\{j_1,j_2\}$ with $j_1,j_2 \in C$. Since $M_{e,j_1} = 0$ it follows that for any $j \in C$ we must have $M_{e,j} = 0$, i.e., $M[e,C] = 0$. This is true for any $e \in \binom{C}{2}$ thus we get that $M[\binom{C}{2},C] = 0$.
\end{proof}



To complete the proof of Lemma \ref{lem:main} assume that $n \geq 3n_0^2$ for some $n_0 \geq 2$. We need to show that there is a subset $C$ of $[n]$ of size at least $n_0$ such that $M[\binom{C}{2},C] = 0$.

First note that if the graph $(S_1,E_1(S_1))$ has a connected component of size at least $n_0$ then we are done by Lemma \ref{lem:Msparsity}. Also note that if $S_0$ has size at least $n_0$ we are also done because all the columns indexed by $S_0$ are identically zero by definition.

In the rest of the proof we will thus assume that $|S_0| < n_0$ and that the connected components of $(S_1,E_1(S_1))$ all have size $ < n_0$. We will show in this case that $E_0$ necessarily contains a clique of size at least $n_0$ (i.e., a subset of the form $\binom{C}{2}$ where $|C| \geq n_0$) and this will prove our claim since all the rows in $E_0$ are identically zero by definition. The intuition is as follows: the assumption that $|S_0| < n_0$ and that the connected components of $(S_1,E_1(S_1))$ have size $< n_0$ mean that the graph $(S_1,E_1(S_1))$ is very sparse. In particular this means that $E_1$ has to be small which means that $E_0 = E_1^c$ must be large and thus it must contain a large clique.

More precisely, to show that $E_1$ is small note that it consists of those edges that are either in $E_1(S_1)$ or, otherwise, they must have at least one node in $S_1^c = S_0$. Thus we get that
\[
|E_1| \leq |E_1(S_1)| + |S_0|(n-1) \leq |E_1(S_1)| + (n_0-1) (n-1).
\]
where in the second inequality we used the fact that $|S_0| < n_0$. Also since the connected components of $(S_1,E_1(S_1))$ all have size $<n_0$ it is not difficult to show that $|E_1(S_1)| < n_0 n/2$ (indeed if we let $x_1,\dots,x_k$ be the size of each connected component we have $|E_1(S_1)| \leq \frac{1}{2} \sum_{i=1}^k x_i^2 < \frac{1}{2} \sum_{i=1}^k n_0 x_i \leq \frac{1}{2} n_0 n$).
Thus we get that
\[
|E_1| \leq \frac{n_0 n}{2} + (n_0 - 1)(n-1) \leq \left(\frac{3}{2} n_0 - 1\right) n
\]
Thus this means, since $E_0 = \binom{n}{2} \setminus E_1$:
\[
|E_0| \geq \binom{n}{2} - \left(\frac{3}{2} n_0 - 1\right) n > \frac{n^2}{2} - \frac{3}{2} n_0 n
\]
We now invoke a result of Tur{\'a}n to show that $E_0$ must contain a clique of size at least $n_0$:
\begin{theorem}[{Tur{\'a}n, see e.g., \cite{Aigner_TuranTheorem}}]
Any graph on $n$ vertices with more than $\left(1-\frac{1}{k}\right) \frac{n^2}{2}$ edges contains a clique of size $k+1$.
\end{theorem}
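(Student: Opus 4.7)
The plan is to prove the contrapositive: any graph $G$ on $n$ vertices with no clique of size $k+1$ satisfies $|E(G)| \leq (1 - 1/k) n^2/2$. I would proceed by strong induction on the pair $(n,k)$ ordered lexicographically. The base cases $n \leq k$ are immediate since $\binom{n}{2} \leq (1 - 1/k)n^2/2$ holds throughout that range.

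For the inductive step, let $G$ be $K_{k+1}$-free on $n > k$ vertices. First, if $G$ contains no $K_k$ at all, then the inductive hypothesis applied with parameter $k-1$ already gives $|E(G)| \leq (1 - 1/(k-1))n^2/2 < (1 - 1/k)n^2/2$, and we are done. Otherwise, fix a $k$-clique in $G$ on vertex subset $A$, and write $B = V(G) \setminus A$, so $|B| = n - k$. I would then partition the edges of $G$ into those inside $A$, inside $B$, and crossing between $A$ and $B$.

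The bookkeeping of each class is short. Inside $A$ there are exactly $\binom{k}{2}$ edges. The induced subgraph $G[B]$ is still $K_{k+1}$-free, so by the induction hypothesis (same $k$, smaller vertex count) it has at most $(1 - 1/k)(n-k)^2/2$ edges. For crossing edges, the key observation is that no $b \in B$ can be adjacent to all $k$ vertices of $A$, since that would yield a $K_{k+1}$ in $G$; hence each $b \in B$ contributes at most $k - 1$ crossing edges, giving at most $(k-1)(n-k)$ in total.

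Summing the three contributions gives the desired bound via the identity
\[
\binom{k}{2} + \frac{k-1}{2k}(n-k)^2 + (k-1)(n-k) = \frac{k-1}{2k}\bigl(k + (n-k)\bigr)^2 = \Bigl(1 - \frac{1}{k}\Bigr)\frac{n^2}{2}.
\]
The main subtlety, and the only real computational step, is spotting that these three independently-derived bounds fit together as a perfect square in $n = k + (n-k)$. Once that algebraic observation is made the induction closes exactly, which also explains why the Tur\'an graph $T(n,k)$ attains the bound.
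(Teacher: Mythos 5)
Your proof is correct. There is, however, nothing in the paper to compare it against: Tur\'an's theorem is invoked as a known classical result with a citation, and no proof is given. Your argument is essentially the standard inductive proof found in the cited reference (close to Tur\'an's original): delete a $k$-clique $A$, bound the edges inside $A$, inside $B=V\setminus A$, and across, and observe that $\binom{k}{2}+\frac{k-1}{2k}(n-k)^2+(k-1)(n-k)=\frac{k-1}{2k}\bigl(k+(n-k)\bigr)^2$ --- all three bounds and the final identity check out. The one place you deviate from the textbook version is in guaranteeing that a $K_k$ exists: the usual argument takes $G$ edge-maximal among $K_{k+1}$-free graphs on $n$ vertices (so a $K_k$ must exist, else an edge could be added), whereas you dispose of the $K_k$-free case by a secondary induction on $k$, using $1-\frac{1}{k-1}<1-\frac{1}{k}$. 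This is sound: your lexicographic induction on $(n,k)$ works because both appeals to the hypothesis, at $(n-k,k)$ and at $(n,k-1)$, strictly decrease the pair. The only loose end is $k=1$, where $1-\frac{1}{k-1}$ is undefined; but there the first case is vacuous (every graph with a vertex contains a $K_1$) and the main computation still closes, so nothing breaks. For the paper's purposes ($k=n_0-1\geq 1$) the statement as you prove it is exactly what is needed.
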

By taking $k = n_0-1$ we see that $E_0$ contains a clique of size $n_0$ if
\[
\frac{n^2}{2} - \frac{3}{2} n_0 n \geq \left(1-\frac{1}{n_0-1}\right) \frac{n^2}{2}
\]
This simplifies into
\[
n \geq 3n_0(n_0-1)
\]
which is true for $n \geq 3n_0^2$.
\end{proof}

\section{Slices of the $3\times 3$ positive semidefinite cone}
\label{sec:slices}

Certain slices of $\S^3_+$ are known to admit a second-order cone representation. For example the following second-order cone representation of the slice  $\{X \in \S^3_+ : X_{11} = X_{22}\}$ appears in \cite{glineurSOC_ICCOPT2013}:
\begin{equation}
\begin{bmatrix}
t & a & b\\
a & t & c\\
b & c & s
\end{bmatrix} \succeq 0
\quad
\Longleftrightarrow
\quad
\exists u, v \in \RR \text{ s.t. }
\begin{bmatrix} t+a & b+c\\ b+c & u\end{bmatrix} \succeq 0, \quad
\begin{bmatrix} t-a & b-c\\ b-c & v\end{bmatrix} \succeq 0, \quad
u+v = 2s.
\label{eq:3x3slicerep}
\end{equation}
(The $2\times 2$ positive semidefinite constraints can be converted to second-order cone constraints using~\eqref{eq:SOC2x2PSD}). To see why \eqref{eq:3x3slicerep} holds note that by applying a congruence transformation by $\frac{1}{\sqrt{2}}\left[\begin{smallmatrix} 1 & 1 & 0\\
1 & -1 & 0\\
0 & 0 & 2\end{smallmatrix}\right]$ on the $3\times 3$ matrix on the left-hand side of \eqref{eq:3x3slicerep}
we get that
\[
\begin{bmatrix}
t & a & b\\
a & t & c\\
b & c & s
\end{bmatrix}
\succeq 0
\Longleftrightarrow
\begin{bmatrix}
t+a & 0 & b+c\\
0 & t-a & b-c\\
b+c & b-c & 2s
\end{bmatrix} \succeq 0.
\]
The latter matrix has an arrow structure and thus using results on the decomposition of matrices with chordal sparsity pattern \cite{griewank1984existence,grone1984positive,agler1988positive} we get the decomposition \eqref{eq:3x3slicerep}.

On the other hand, the proof presented in the previous section can  be used to show that certain other slices do \emph{not} admit second-order cone representations. Consider the following slice which we denote by $C$:
\[
C = \{X \in \S^3_{+} : X_{11} = X_{22} + X_{33}\}.
\]
We will argue that there is no second-order cone representable set that is contained between $C$ and $\S^3_+$ (in particular $C$ does not have a second-order cone lift). To do so we first need to introduce the notion of \emph{generalized slack matrix} for a pair of convex cones $K_1 \subseteq K_2$: such matrix has its rows indexed by $\ext(K_2^*)$ (the valid linear inequalities of $K_2$) and its columns indexed by $\ext(K_1)$ and is defined by
\[
S_{K_1,K_2}[x,y] = \langle x, y \rangle \quad x \in \ext(K_2^*), y \in \ext(K_1).
\]
One can show that $S_{K_1,K_2}$ has a second-order cone factorization if and only if there exists a second-order cone representable set between $K_1$ and $K_2$ (this can be proved using exactly the same arguments as in, e.g., Theorem 4 of \cite{psdranksurvey}). 


 We now claim that matrices $A_n$ of the form given by \eqref{eq:sparsityAn} appear as submatrices of the generalized slack matrix of $C$ and $\S^3_+$. Indeed note that the vectors $v_i = (1,\cos(\theta_i),\sin(\theta_i))$ given in \eqref{eq:vistrig} satisfy $v_i v_i^T \in C$. Furthermore for any $i,j$ it is clear that $\langle (v_i \times v_j)(v_i \times v_j)^T, X \rangle \geq 0$ is a valid inequality for $\S^3_+$. It thus follows that the matrix
\[
A_n[\{i_1,i_2\},j] = \langle (v_{i_1} \times v_{i_2})(v_{i_1} \times v_{i_2})^T, v_j v_j^T \rangle = \langle (v_{i_1} \times v_{i_2}), v_j\rangle^2
\]
is a submatrix of the generalized slack matrix of $C$ and $\S^3_+$. The arguments from the previous section show that $\ranksoc(A_n)$ goes to $+\infty$ with $n$. Thus this shows that there cannot be any second-order cone representable set that lies between $C$ and $\S^3_+$.

\section*{Acknowledgments}

I would like to thank Fran\c{c}ois Glineur, Pablo Parrilo and James Saunderson for comments on a draft of this manuscript that helped improve the exposition.

\bibliographystyle{alpha}
\bibliography{../../bib/nonnegative_rank}

\newcommand{\etalchar}[1]{$^{#1}$}
\begin{thebibliography}{AHMR88}

\bibitem[AHMR88]{agler1988positive}
Jim Agler, William Helton, Scott McCullough, and Leiba Rodman.
\newblock Positive semidefinite matrices with a given sparsity pattern.
\newblock {\em Linear algebra and its applications}, 107:101--149, 1988.

\bibitem[Aig95]{Aigner_TuranTheorem}
Martin Aigner.
\newblock Tur{\'a}n's graph theorem.
\newblock {\em The American Mathematical Monthly}, 102(9):808--816, 1995.

\bibitem[BTN01a]{ben2001lectures}
Aharon Ben-Tal and Arkadi Nemirovski.
\newblock {\em Lectures on modern convex optimization: analysis, algorithms,
  and engineering applications}.
\newblock SIAM, 2001.

\bibitem[BTN01b]{ben2001polyhedral}
Aharon Ben-Tal and Arkadi Nemirovski.
\newblock On polyhedral approximations of the second-order cone.
\newblock {\em Mathematics of Operations Research}, 26(2):193--205, 2001.

\bibitem[FGP{\etalchar{+}}15]{psdranksurvey}
Hamza Fawzi, Jo{\~a}o Gouveia, Pablo~A. Parrilo, Richard~Z. Robinson, and
  Rekha~R. Thomas.
\newblock Positive semidefinite rank.
\newblock {\em Mathematical Programming}, 153(1):133--177, 2015.

\bibitem[GJSW84]{grone1984positive}
Robert Grone, Charles~R. Johnson, Eduardo~M. S{\'a}, and Henry Wolkowicz.
\newblock Positive definite completions of partial {H}ermitian matrices.
\newblock {\em Linear algebra and its applications}, 58:109--124, 1984.

\bibitem[GPS13]{glineurSOC_ICCOPT2013}
Fran{\c{c}}ois Glineur, Pablo Parrilo, and James Saunderson.
\newblock Second-order cone representations of positive semidefinite cones.
\newblock Talk at the Fourth SDP days.
  \href{https://docs.google.com/viewer?a=v&pid=sites&srcid=ZGVmYXVsdGRvbWFpbnxzZHBkYXk0fGd4OjYzNzkyYjMzMTMwYWMzMjc}{Link
  to slides} (retrieved online October 5, 2016), 2013.

\bibitem[GPT13]{gouveia2011lifts}
Jo{\~a}o Gouveia, Pablo~A. Parrilo, and Rekha~R. Thomas.
\newblock Lifts of convex sets and cone factorizations.
\newblock {\em Mathematics of Operations Research}, 38(2):248--264, 2013.

\bibitem[GT84]{griewank1984existence}
Andreas Griewank and Philippe~L. Toint.
\newblock On the existence of convex decompositions of partially separable
  functions.
\newblock {\em Mathematical Programming}, 28(1):25--49, 1984.

\end{thebibliography}

\end{document}